\documentclass[12pt]{amsart}
\usepackage[english]{babel}
\usepackage[T1]{fontenc}
\parindent=0in

\usepackage{geometry}
\geometry{height=21.5cm}
\geometry{width=14.5cm}
\usepackage[dvips]{graphicx}
\usepackage{fullpage}
\usepackage{amsmath,amscd,amsthm,amsfonts,latexsym,epsfig}
\usepackage{color}

\theoremstyle{plain}
\newtheorem{theorem}                 {Theorem}      [section]
\newtheorem{proposition}  [theorem]  {Proposition}

\newtheorem*{theorem*}{Theorem}
\theoremstyle{definition}
\newtheorem{example}      [theorem]  {Example}
\newtheorem{remark}       [theorem]  {Remark}

\def \imto{\hookrightarrow}

\newcommand{\p}{\partial}

\def \r{\mbox{${\mathbb R}$}}

\def \h{\mbox{${\mathbb H}$}}
\newcommand{\df}{\,\mathrm{d}}
\def\E{\mathbb{E}}
\def \s{\mbox{${\mathbb S}$}}

\def \ri{\mbox{$\mathrm{Ricc}$}}

\def \ln{\mbox{${\overline{{\nabla}}}$}}
\DeclareMathOperator{\grad}{grad}

\DeclareMathOperator{\trace}{trace}

\begin{document}

\title{Biharmonic constant mean curvature surfaces in Killing submersions}

\author{Stefano Montaldo}
 
 \address{Universit\`a degli Studi di Cagliari\\
 Dipartimento di Matematica e Informatica\\
 Via Ospedale 72\\
 09124 Cagliari}
 \email{montaldo@unica.it}
 
\author{Irene I. Onnis}
\address{Departamento de Matem\'{a}tica, C.P. 668\\ ICMC,
USP, 13560-970, S\~{a}o Carlos, SP\\ Brasil}
\email{onnis@icmc.usp.br}

\author{Apoena Passos Passamani}
\address{Departamento de Matem\'{a}tica, UFES, 29075-910, Vit\'{o}ria, ES, Brasil}
	\email{apoena.passamani@ufes.br}

\subjclass[2000]{53C42, 58E20}
\keywords{Biharmonic surfaces, constant mean curvature, Killing submersions.}
 \thanks{
 The first author was supported by P.R.I.N. $2015$ -- Real and Complex Manifolds: Geometry, Topology and Harmonic Analysis -- Italy. The second author was supported by grant 2016/24707-4, 
 S\~ao Paulo Research Foundation (Fapesp).}


\begin{abstract}
A $3$-dimensional Riemannian manifold is called Killing submersion if it admits a Riemannian submersion over a surface such that its fibers are the trajectories of a complete unit Killing vector field. In this paper, we give a characterization of proper biharmonic CMC surfaces in a Killing submersion. In the last part, we also classify the proper biharmonic Hopf cylinders in a Killing submersion. 
\end{abstract}

\maketitle

\section{Introduction}
{\it Harmonic maps}  are critical points of the {\em energy} functional
\begin{equation}\label{energia1}
E(\varphi)=\frac{1}{2}\int_{N}\,|d\varphi|^2\,dv_g \,\, ,
\end{equation}
where $\varphi:(N,g)\to(\overline{N},h)$ is a smooth map between two Riemannian
manifolds $N$ and $\overline{N}$. In analytical terms, the condition of harmonicity is equivalent to the fact that the map $\varphi$ is a solution of the Euler-Lagrange equation associated to the energy functional \eqref{energia1}, i.e.
\begin{equation}\label{harmonicityequation}
    {\trace} \, \nabla d \varphi =0 \,.
\end{equation}
The left member of \eqref{harmonicityequation} is a vector field along the map $\varphi$ or, equivalently, a section of the pull-back bundle $\varphi^{-1} \, (T\overline{N})$: it is called {\em tension field} and denoted by $\tau (\varphi)$. 
We refer to \cite{PBJCW,EL,Xin} for notations and
background on harmonic maps.

A related topic of growing interest deals with the study of the so called {\it biharmonic maps}: these maps, which provide a natural generalization of harmonic maps, are the critical points of the bienergy functional (as suggested by Eells--Lemaire in \cite{EL}):
\begin{equation}\label{bienergia}
    E_2(\varphi)=\frac{1}{2}\int_{N}\,|\tau (\varphi)|^2\,dv_g \,\, .
\end{equation}
In \cite{J1}, Jiang derived the first and the second variation formulas for the bienergy. In particular, he showed that the Euler-Lagrange equation associated to $E_2(\varphi)$ is given by
\begin{equation}\label{bitensionfield1}
    \tau_2(\varphi) = - \triangle \tau(\varphi)- \trace \overline{R}(d \varphi, \tau(\varphi)) d \varphi = 0 \,\, ,
    \end{equation}
where  $\triangle$ is the rough Laplacian defined on sections of $\varphi^{-1} \, (T\overline{N})$ and
\begin{equation}\label{curvatura}
    \overline{R} (X,Y)= \ln_X \ln_Y - \ln_Y \ln_X -\ln_{[X,Y]}
\end{equation}
is the curvature operator on $(\overline{N},h)$.
Equation~\eqref{bitensionfield1} is a {\it fourth order}  semi-linear elliptic system of differential equations. We also note that any harmonic map is an absolute minimum of the bienergy and, so, it is trivially biharmonic. Therefore, a general working plan is to study the existence of {\it proper} biharmonic maps, i.e. biharmonic maps which are not harmonic. We refer to \cite{C1,Montaldo} for existence results and general properties of biharmonic maps.
\\


In this paper, we study biharmonic surfaces in the total space of a \textit{Killing submersion}. A Riemannian submersion $\pi: \E\rightarrow M$ of a $3$-dimensional Riemannian manifold $\E$  over a surface $M$ will be called a  Killing submersion if its fibers are the trajectories of a complete unit Killing vector field~$\xi$. Killing submersions are determined by two functions on $M$, its \textit{Gaussian curvature} $G$ and the so called \textit{bundle curvature} $r$ (for more details, see \cite{Espinar,Manzano} and Section~\ref{killing}) and, for this reason, we denote them by $\E(G,r)$. A remarkable family of Killing submersions are the homogeneous  $3$-spaces. 
In fact, with the exception of the hyperbolic $3$-space $\h^3(-1)$, simply-connected homogeneous Riemannian $3$-manifolds with isometry group of dimension $4$ or $6$ can be represented by a $2$-parameter family $\E(c,b)$, where $c,b\in\r$. These $\E(c,b)$-spaces are $3$-manifolds admitting a global unit Killing vector field whose integral curves are the fibers of a certain Riemannian submersion over the simply-connected constant Gaussian curvature surface $M(c)$ and, therefore, they determine Killing submersions $\pi:\E(c, b)\to M(c)$ (for more details, see~\cite{Da}). A local description of these examples can be given by using the so called \textit{Bianchi-Cartan-Vranceanu} spaces (see Section~\ref{killing}). As it turns out, the $\E(c,b)$-spaces are the only simply-connected homogeneous $3$-manifolds admitting the structure of a Killing submersion (see \cite{Manzano}).\\

Our work find inspiration from the paper \cite{OW}, where Ou and Wang gave a complete classification of constant mean curvature proper biharmonic surfaces in $3$-dimensional  Bianchi-Cartan-Vranceanu spaces. Following their ideas we give the following characterisation of proper biharmonic CMC surfaces in a Killing submersion which represents the main result of the paper.

\begin{theorem}\label{main-theorem}
Let $S$ be a proper biharmonic CMC surface in a Killing submersion $\E(G,r)$ and let $\phi$ be the angle between the unit normal vector field $\eta$ of $S$ and the Killing vector field $\xi$. 
\begin{itemize}
\item[a)] Assume that $\phi\equiv \pi/2$ on $S$. Then, $G$ and $r$ are constant along $S$, with $G\neq 4r^2$,  and the surface is locally a Hopf tube over a curve with constant geodesic curvature $\kappa^2_g=G-4r^2$.
\item[b)] Assume that $\phi\neq\pi/2$ at every point of $S$, then:
\begin{itemize}
\item[b1)] either $\grad(r)\equiv 0$ on $S$ and $S$ is an open set of $\s^2(1/\sqrt{2r^2})$ in $\s^3(1/|r|)$;
\item[b2)] or $\grad(r)\neq 0$ at every point of $S$ and, in this case, $4r^2-G\neq 0$ on $S$, the angle $\phi$ is determined by $\tan(2\phi)=2|\grad(r)|/(4r^2-G)$ and the shape operator $A$ of the surface satisfies
\begin{equation}\label{A-phi}
2|A|^2=\tan \phi\, \Delta \phi + |\grad(\phi)|^2.
\end{equation}  
\end{itemize}
\end{itemize}
\end{theorem}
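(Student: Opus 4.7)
The argument starts from the standard specialisation of the biharmonic equation to CMC isometric immersions of a surface into a $3$-manifold: with $H\equiv\mathrm{const}\neq 0$ (so we are in the proper biharmonic regime), the bitension field vanishes if and only if
\begin{equation*}
|A|^{2}\;=\;\overline{\ri}(\eta,\eta)\qquad\text{and}\qquad \bigl(\overline{\ri}\,\eta\bigr)^{\top}\;=\;0,
\end{equation*}
where $\overline{\ri}$ is the Ricci operator of $\E$ and $(\cdot)^{\top}$ denotes projection onto $TS$ (cf.~\cite{C1,Montaldo}). Next I would compute $\overline{\ri}$ for $\E(G,r)$ along the decomposition $\eta = \cos\phi\,\xi + \sin\phi\,e$ with $e$ unit horizontal: using the identity $\overline{\nabla}_{X}\xi = r\,X\times\xi$ and the sectional curvatures recalled in Section~\ref{killing}, one finds $\overline{\ri}(\xi,\xi) = 2r^{2}$, $\overline{\ri}$ restricted to the horizontal plane equals $(G-2r^{2})\mathrm{Id}$, and the mixed vertical/horizontal component $\overline{\ri}(\xi,\cdot)$ is a horizontal functional proportional to $\grad(r)$.

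For case~(a), $\phi \equiv \pi/2$ forces $\xi$ to be tangent to $S$; by completeness of $\xi$, the surface is locally the Hopf tube $\pi^{-1}(\gamma)$ over a curve $\gamma\subset M$. A direct computation in the frame $\{\xi,T\}$ (with $T$ the horizontal lift of $\gamma'$) gives $H$ proportional to $\kappa_{g}$ and $|A|^{2} = \kappa_{g}^{2} + 2r^{2}$; the tangential equation forces $Z(r) = Z(G) = 0$ for $Z$ transverse to $\gamma$, so $r$ and $G$ are constant along $S$, and the normal equation collapses to $\kappa_{g}^{2} = G - 4r^{2}$ (which in particular excludes $G = 4r^{2}$ in the proper biharmonic regime). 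For~(b1), with $\cos\phi \neq 0$ everywhere and $\grad(r)\equiv 0$ on $S$, the mixed Ricci term vanishes; expanding $(\overline{\ri}\,\eta)^{\top} = 0$ in an adapted frame gives $(G-4r^{2})\sin\phi\cos\phi = 0$, whence $G = 4r^{2}$, so $\E$ is locally the round $\s^{3}(1/|r|)$. The normal equation then yields $|A|^{2} = 2r^{2}$, and the classical classification of CMC proper biharmonic surfaces of $\s^{3}$ identifies $S$ (in the non-Hopf-tube regime $\phi\neq\pi/2$) with $\s^{2}(1/\sqrt{2r^{2}})$.

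For~(b2), with $\grad(r) \neq 0$ at every point, I would choose $e = \grad(r)/|\grad(r)|$; the tangential equation then rearranges, via double-angle identities, into $(4r^{2} - G)\sin(2\phi) = 2|\grad(r)|\cos(2\phi)$, i.e.\ $\tan(2\phi) = 2|\grad(r)|/(4r^{2} - G)$ (in particular $4r^{2} \neq G$). Differentiating this identity along $S$ expresses $\grad(\phi)$ and then $\Delta\phi$ in terms of $r$, $G$ and $\grad(r)$; substituting into the normal equation $|A|^{2} = \overline{\ri}(\eta,\eta)$ and simplifying produces~\eqref{A-phi}. The main obstacle is precisely this bookkeeping in~(b2): the unknowns $\phi$, $A$ and the non-diagonal Ricci term $\overline{\ri}(\xi,\cdot)$ couple through the two biharmonic equations, so pinning down $\phi$ requires both the adapted-frame choice and the recognition of the double-angle pattern, and the final identity~\eqref{A-phi} emerges only after re-differentiating the angular relation and feeding the result back into the normal equation.
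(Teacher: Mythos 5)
Your plan for case b2) — the heart of the theorem — fails at the decisive step. After substituting the angular relation, the normal biharmonic equation in the adapted frame is the purely pointwise identity $|A|^2=2r^2+|\grad(r)|\tan\phi$ (first equation of \eqref{e1-norma}); it contains no derivatives of $\phi$, so nothing in it can be matched against $\tan\phi\,\Delta\phi+|\grad(\phi)|^2$. Differentiating $\tan(2\phi)=2|\grad(r)|/(4r^2-G)$, as you propose, expresses $\grad(\phi)$ and then $\Delta\phi$ through the Hessian of $r$ and the derivatives of $G$ along $S$ — free data, unconstrained by the biharmonic system and absent from \eqref{A-phi} — so feeding the result back into the normal equation yields a relation in second and third derivatives of $r$, not \eqref{A-phi}. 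The two ingredients your plan never invokes are precisely what the paper uses: the expression of the shape operator through $\grad(\phi)$ and $r$ coming from the Killing-field compatibility equations (Proposition~\ref{compatibilidade} and the matrix \eqref{A1}), which gives \eqref{e-norma}, and the $e_2$-component of the Codazzi equation, \eqref{codazzie_2}, which is what supplies the second-order terms $e_1e_1(\phi)+e_2e_2(\phi)$; combining these with the normal equation and the connection coefficients \eqref{nabla-ei} gives \eqref{co1} and then \eqref{A-phi}. Two further errors in b2): the parenthetical ``in particular $4r^2\neq G$'' does not follow from the double-angle identity — if $G=4r^2$ the tangential equation only forces $\cos(2\phi)=0$, i.e.\ $\phi\equiv\pi/4$, and excluding this is the separate content of Proposition~\ref{propRconst} (constancy of $|\grad(r)|=H^2$ plus the commutator identity $e_1(|\grad(r)|)=-\mu\,|\grad(r)|$); and since the tangential equations force $\grad(r)$ to be tangent to $S$ (equation \eqref{nulla}), the horizontal part of $\eta$ points along $J(\grad(r))$, so you cannot take $e=\grad(r)/|\grad(r)|$ in $\eta=\cos\phi\,\xi+\sin\phi\,e$.

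Cases a) and b1) follow the paper's route but also have gaps. In b1), ``$(G-4r^2)\sin\phi\cos\phi=0$, whence $G=4r^2$'' silently assumes $\sin\phi\neq 0$: under the hypothesis $\phi\neq\pi/2$ the subcase $\phi\equiv 0$ is allowed and must be ruled out, as in the paper's case (1) and Remark~\ref{zero} ($\eta=\xi$ makes the horizontal distribution tangent to $S$, hence integrable, forcing $r=0$ and then $|A|=0$, contradicting properness). You also need the passage from constancy of $r$ and $G$ along $S$ to constancy on an open set of $\E(G,r)$ (the graph argument of Remark~\ref{r-G-constants}, which uses $\phi\neq\pi/2$) before invoking the BCV models, and you must dispose of the flat alternative $r=0$, $G=0$ (ambient locally $\r^3$, excluded by \cite{CI}) instead of jumping straight to $\s^3(1/|r|)$. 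In a), your flow-invariance argument for the Hopf-tube structure is correct and in fact more direct than the paper's frame computation \eqref{eq-nabla-ei-eta}; but the tangential equation annihilates the derivative of $r$ \emph{along} $\gamma$ (via $\ri(\eta,\xi)=-(x'r_x+y'r_y)$ in \eqref{ricci-2}), not transverse to it as you state, and the constancy of $G$ along $S$ is not produced by the tangential equation — it follows a posteriori from $H^2=G-4r^2$ once $H$ and $r$ are known to be constant, as in Theorem~\ref{teo-hopf-tube}.
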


\begin{remark}
From Theorem~\ref{main-theorem}, by a case by case inspection, we easily deduce that a CMC proper biharmonic surface $S$ in a Killing submersion $\E(G,r)$ with constant angle $\phi$ must be an Hopf tube, that is $\phi=\pi/2$.
\end{remark}

\section{Killing submersions}\label{killing}

A Riemannian submersion $\pi: \E\rightarrow M$ of a $3$-dimensional Riemannian manifold $\E$  over a surface $M$ is called a {\it Killing submersion} if its fibers are the trajectories of a complete unit Killing vector field $\xi$ (for more details, see \cite{Espinar, Manzano}). Since $\xi$ has constant norm, the fibers are geodesics in $\E$ and they form a foliation called the {\em vertical foliation}, denoted by $\mathcal{F}$. Most of the geometry of a Killing submersion is encoded in a pair of functions: ${ G},\, { r}$. While the first one,  ${G}$, represents the Gaussian curvature function of the base surface $M$, the other one, $r$, denotes the so called \emph{bundle curvature} which is defined as follows. Since $\xi$ is a (vertical) unit Killing vector field, then it is clear that for any vector field, $Z$, on $\E$, there exists a function ${r}_Z$ (which, a priori, depends on the chosen vector field) such that $\overline{\nabla}_Z \xi={ r}_Z\, Z\wedge\xi$, where $\overline{\nabla}$ denotes the Levi-Civita connection in $\E$. Actually, it is not difficult to see that ${ r}_Z$  does not depend on the vector field $Z$ (see \cite{Espinar}),
 so we get a function $r\in C^{\infty}(\E)$, the bundle curvature, satisfying
\begin{equation} \overline{\nabla}_Z \xi={ r}\, Z\wedge\xi\,. \label{bundle-curvature}
\end{equation}
The bundle curvature is obviously constant along the fibers and, consequently, it can be seen as a function on the base, ${r}\in C^{\infty}(M)$. In the product spaces $M\times\r$ the projection over the first factor is a Killing submersion, so its bundle curvature is ${r}\equiv 0$. More generally, using \eqref{bundle-curvature},  it is easy to deduce that $r\equiv 0$ in a Killing submersion if and only if the horizontal distribution in the total space is integrable. From now on, a Killing submersion will be denoted by $\E(G,r)$.

The existence of a Killing submersion over a simply connected surface $M$, with a prescribed \textit{bundle curvature}, $r\in
C^{\infty}(M)$, has been proved in \cite{Manzano}. Uniqueness, up to isomorphisms, is guaranteed under the assumption of simply connectedness for the total space also.

Moreover, the following result provides the existence of Killing submersions with prescribed bundle curvature over arbitrary Riemannian surfaces.
\begin{proposition}\cite{BarGar-preparation}\label{pro:existence-killing-submersion}
 Let $M$ be a Riemannian surface and choose any ${ r}\in C^{\infty}(M)$. Then, there exists a Killing submersion over $M$ with bundle curvature ${ r}$. In particular, it can be chosen to have compact fibers.
 \end{proposition}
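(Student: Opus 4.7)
The plan is to realise $\E$ as the total space of a principal $\s^1$-bundle $\pi\colon \E\to M$ equipped with a connection $1$-form $\omega$, and to put on $\E$ the bundle-type metric $g_\E = \pi^{*}g_M + \omega\otimes\omega$. The fundamental vector field $\xi$ of the $\s^1$-action is then automatically a complete unit Killing field tangent to the compact fibers, and $\pi$ is a Riemannian submersion. Thus the problem reduces to choosing the bundle and the connection so that the resulting bundle curvature, in the sense of \eqref{bundle-curvature}, equals the prescribed function $r$.

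The first step is to translate \eqref{bundle-curvature} into a condition on the curvature $d\omega$. Applying the Koszul formula to $g_\E$, together with $[\xi,X]=0$ and the $\xi$-invariance of $\omega$ for any basic horizontal vector field $X$, I expect to obtain $\langle \ln_X\xi, Y\rangle = \tfrac12\, d\omega(X,Y)$ for all basic horizontal $X,Y$. Comparing with $\langle r\,X\wedge\xi, Y\rangle$, which (since the base is two-dimensional) is $r$ times the oriented area of $d\pi(X)\wedge d\pi(Y)$, should give the clean equivalence: the bundle curvature of the submersion equals $r$ if and only if $d\omega = 2r\,\pi^{*}dA_M$, where $dA_M$ is the area form of $M$.

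Having recast the problem, I would follow the standard prequantisation construction. Fix a fiber length $L>0$ and a good cover $\{U_\alpha\}$ of $M$; by the Poincar\'e lemma, choose primitives $\eta_\alpha\in\Omega^1(U_\alpha)$ with $d\eta_\alpha = 2r\,dA_M|_{U_\alpha}$. On double overlaps write $\eta_\alpha - \eta_\beta = dh_{\alpha\beta}$, and use the rule $t_\beta \equiv t_\alpha + h_{\alpha\beta}\pmod L$ as transition functions between the local trivialisations $U_\alpha\times(\r/L\z)$. The local connection forms $\omega_\alpha = dt_\alpha + \pi^{*}\eta_\alpha$ then patch to a global connection $\omega$ with $d\omega = 2r\,\pi^{*}dA_M$, and verifying that $(\E,g_\E)$ becomes a Killing submersion with bundle curvature $r$ will be a direct check.

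The main obstacle is ensuring that the $\{h_{\alpha\beta}\}$ actually define an $\s^1$-valued \v{C}ech cocycle: on triple overlaps $h_{\alpha\beta}+h_{\beta\gamma}+h_{\gamma\alpha}$ is a real constant $c_{\alpha\beta\gamma}$, and the cocycle $\{c_{\alpha\beta\gamma}\}$ represents the de Rham class $[2r\,dA_M]\in H^2(M;\r)$, which must lie in $L\cdot H^2(M;\z)$ for the bundle to exist. I would handle this by adjusting $L$: when $M$ is non-compact the obstruction vanishes since $H^2(M;\r)=0$ and any $L$ works, whereas when $M$ is closed the class is captured by the number $\int_M 2r\,dA_M$, and taking $L$ to divide the absolute value of this integral (or arbitrary if it vanishes) renders the cocycle $L$-integral. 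This choice of $L$ then produces a principal $\s^1$-bundle with compact fibers carrying the desired connection, completing the construction.
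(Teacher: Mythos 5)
The paper does not actually prove this proposition: it is quoted from the reference \cite{BarGar-preparation} (Barros--Garay, in preparation), so there is no internal proof to compare against. Judged on its own merits, your construction is correct and is the standard prequantization route to this result. The two load-bearing steps both check out: (i) for the metric $\pi^{*}g_M+\omega\otimes\omega$ on a principal circle bundle, the Koszul formula applied to basic horizontal fields $X,Y$ (using $[\xi,X]=[\xi,Y]=0$ and that $\langle X,Y\rangle$ is basic) gives $\langle\overline{\nabla}_X\xi,Y\rangle=-\tfrac12\langle[X,Y],\xi\rangle=\tfrac12\,d\omega(X,Y)$, so prescribing the bundle curvature to be $r$ is indeed equivalent to $d\omega=\pm 2r\,\pi^{*}dA_M$ (the factor $2$ matches the paper's identity $\langle[E_1,E_2],E_3\rangle=2r$; the sign is an orientation convention you should fix once); and (ii) the integrality obstruction $[2r\,dA_M]\in L\cdot H^2(M;\mathbb{Z})$ is genuinely killable by your rescaling trick, since for non-compact $M$ one has $H^2(M;\mathbb{R})=0$, while for closed $M$ the class is the single number $c=\int_M 2r\,dA_M$ and any $L$ with $c\in L\mathbb{Z}$ works. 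This freedom to choose the fiber length $L$ is exactly what makes the statement true for \emph{arbitrary} $r$, in contrast to prequantization with a fixed structure group, and it is the one idea a naive bundle-theoretic attempt would miss; you identified it correctly. Two minor caveats: your argument tacitly assumes $M$ oriented (you need a global area form $dA_M$, and indeed the very definition $\overline{\nabla}_Z\xi=r\,Z\wedge\xi$ requires an orientation of $\mathbb{E}$, which forces one on $M$), so either state this hypothesis or reduce the non-orientable case to the orientation double cover; and in the final verification you should note that $\xi=\partial_{t_\alpha}$ is well defined globally because your transition maps are translations, which is also why the circle action is isometric and $\xi$ is a complete unit Killing field with compact fibers.
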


Any Killing submersion is locally isometric to one of the following canonical examples (see \cite{Manzano}) which include, as we will show later, the so called {\em Bianchi-Cartan-Vranceanu spaces} for suitable choices of the functions $\lambda,a,b$.

\begin{example}[Canonical examples]\label{defi:canonical}
Given an open set $\Omega\subset\r^2$ and $\lambda,a,b\in\mathcal{C}^\infty(\Omega)$ with $\lambda>0$, the Killing submersion
\[\pi:(\Omega\times\r,\df s^2_{\lambda,a,b})\to (\Omega,\df s_\lambda^2),\quad\pi(x,y,z)=(x,y),\]
where
\begin{equation}
\df s^2_{\lambda,a,b}=\lambda^2(\df x^2+\df y^2)+(\df z-\lambda(a\df x+b\df y))^2\label{eqn:metrica-general-killing}
\end{equation}
and
$$
\df s^2_{\lambda}=\lambda^2(\df x^2+\df y^2)\,,
$$
will be called the canonical example associated to $(\lambda,a,b)$. The bundle curvature and the Gaussian curvature are given by:
\begin{equation}\label{eq:r-in-canonical-examples}
2 { r}= \frac{1}{\lambda^2}\left( (\lambda b)_x-(\lambda a)_y \right)\,,\quad { G}=- \frac{1}{\lambda^2} \Delta_o (\log \lambda)\,,
\end{equation}
where $\Delta_o$ represents the Laplacian with respect to the standard metric in the plane.\\

Let $\{e_1,e_2\}$ be the orthonormal frame in $(\Omega,\df s_\lambda^2)$, where $e_1=\frac{1}{\lambda}\p_x$ and $e_2=\frac{1}{\lambda}\p_y$, and let $\{E_1,E_2\}$ be the horizontal lift of $\{e_1,e_2\}$ with respect to $\pi$ and $E_3=\partial_z$. Since $\pi$ is the projection over the first two variables, there exist $a,b\in\mathcal{C}^\infty(\Omega)$ such that
\begin{equation}\label{eqn:base-universal-killing}
\left\{\begin{array}{l}
(E_1)_{(x,y,z)}=\tfrac{1}{\lambda(x,y)}\partial_x+a(x,y)\partial_z,\\
(E_2)_{(x,y,z)}=\tfrac{1}{\lambda(x,y)}\partial_y+b(x,y)\partial_z,\\
(E_3)_{(x,y,z)}=\partial_z.
\end{array}\right.
\end{equation}
Note that $\{E_1,E_2,E_3\}$ is an orthonormal frame in $(\Omega\times\r,\df s^2)$ which can be supposed positively oriented after possibly swapping $e_1$ and $e_2$. Now it is clear that the global frame~\eqref{eqn:base-universal-killing} is orthonormal for $\df s^2$ if and only if $\df s^2$ is the metric given by~\eqref{eqn:metrica-general-killing}. Regardless of the values of the functions $a,b\in\mathcal{C}^\infty(\Omega)$, the Riemannian metric given by equation~\eqref{eqn:metrica-general-killing} satisfies that $\pi$ is a Killing submersion over $(\Omega,\df s_\lambda^2)$.

Equation~\eqref{eqn:base-universal-killing} defines a global orthonormal frame $\{E_1,E_2,E_3\}$ for $\df s^2_{\lambda,a,b}$, where $E_1$ and $E_2$ are horizontal, and $E_3$ is a unit vertical Killing field. It is easy to check that $[E_1,E_3]=[E_2,E_3]=0$ and
$$[E_1,E_2]=\frac{\lambda_y}{\lambda^2}E_1-\frac{\lambda_x}{\lambda^2}E_2+\left(\frac{1}{\lambda^2}(b\lambda_x-a\lambda_y)+\frac{1}{\lambda}(b_x-a_y)\right)E_3.$$
\end{example}

\begin{example}[Bianchi-Cartan-Vranceanu spaces]
Particularizing the above construction, one can get models for all Killing submersions over $\r^2$, $\h^2(c)$ and the punctured sphere $\s_*^2(c)$. Given $c\in\r$, we define $\lambda_c\in\mathcal{C}^\infty(\Omega_c)$ as \[\lambda_c(x,y)=\left(1+\tfrac{c}{4}(x^2+y^2)\right)^{-1},\]
where
\[\Omega_c=\begin{cases}\{(x,y)\in\r^2:x^2+y^2<\frac{-4}{c}\}, &\text{if }c<0,\\\r^2\, ,&\text{if }c\geq 0.\end{cases}\]
Then, the metric $\lambda_c^2(\df x^2+\df y^2)$ in $\Omega_c$ has constant Gaussian curvature ${ G}=c$. If, in addition, we choose $a=-\mu\, y$ and $b=\mu\, x$  for some real constant $\mu$, then one  obtains the metrics of the Bianchi-Cartan-Vranceanu spaces $\E(c,\mu)\equiv\Omega_c\times\r$ { (}\cite[Section~2.3]{Da}{)}:
\[\lambda_c^2(\df x^2+\df y^2)+(\df z+\mu\,\lambda_c\,(y\df x-x\df y))^2.\]
A simple computation, using \eqref{eq:r-in-canonical-examples}, gives $\mu={ r}$. Thus, the Bianchi-Cartan-Vranceanu spaces can be seen as the canonical models of Killing submersions with constant bundle curvature and constant Gaussian curvature.
\end{example}

Now, we want to compute the Riemannian curvature $\overline{R}$ of the total space $\E({ G},{ r})$ of a Killing submersion $\pi:\E({ G},{ r})\to M$ in terms of ${ G}$ and the bundle curvature ${ r}$. Since the computation is purely local, we will work in a canonical example (see Example~\ref{defi:canonical}) associated to some functions $\lambda,a,b\in \mathcal{C}^\infty(\Omega)$, with $\lambda>0$ and $\Omega\subset\r^2$. Koszul formula yields the Levi-Civita connection in the canonical orthonormal frame $\{E_1,E_2,E_3\}$ given by~\eqref{eqn:base-universal-killing}:
\begin{equation}\label{eqn:killing-conexion}
\begin{array}{lll}
\overline\nabla_{E_1}E_1=-\frac{\lambda_y}{\lambda^2}E_2,&\overline\nabla_{E_1}E_2=\frac{\lambda_y}{\lambda^2}E_1+{ r} E_3,&\overline\nabla_{E_1}E_3=-{ r} E_2,\\
\overline\nabla_{E_2}E_1=\frac{\lambda_x}{\lambda^2}E_2-{ r} E_3,&\overline\nabla_{E_2}E_2=-\frac{\lambda_x}{\lambda^2}E_1,&\overline\nabla_{E_2}E_3={ r} E_1,\\
\overline\nabla_{E_3}E_1=-{ r} E_2,&\overline\nabla_{E_3}E_2={ r} E_1,&\overline\nabla_{E_3}E_3=0.
\end{array}
\end{equation}

Moreover, the frame $\{ E_1, E_2\}$ is a basis for the smooth \textit{horizontal distribution} $\mathcal{H}$ of $\E({ G}, { r})$. The horizontal projection of a vector field $U$ onto $\mathcal{H}$ will be denoted by $U^h$. A vector $U$ is called \textit{horizontal} if $U = U^h$. A \textit{horizontal curve} is a $C^{\infty }$ curve whose tangent vector lies in the horizontal distribution. Recall that  $\mathcal{H}$ is integrable if and only if ${ r}=0$. Denote by
\begin{eqnarray}
J:\mathcal{H}&\rightarrow& \mathcal{H}\, ,
\end{eqnarray}
the anticlockwise ${\pi}/{2}$ rotation. Then, from \eqref{eqn:killing-conexion}, we have
\begin{equation}
\overline{\nabla}_Z \xi={ r} (Z\wedge\xi)= -{ r} J(Z^h)\, , \quad \langle X, J(Y^h)\rangle=-\langle Y, J(X^h)\rangle\, .  \label{j}
\end{equation}
Another  direct computation from \eqref{eq:r-in-canonical-examples} and \eqref{eqn:killing-conexion} gives
\begin{eqnarray}
&\langle\overline{R}(E_j,E_3)E_j,E_3\rangle=-{ r^2}\, , \quad &\langle\overline{R}(E_1,E_2)E_j,E_3\rangle=-E_j({ r})\, , \quad j=1,2\, ,\label{R1}\\
&\langle\overline{R}(E_1,E_3)E_2,E_3\rangle= 0\, , \quad & \langle\overline{R}(E_1,E_2)E_1,E_2\rangle= 3 { r^2}- { G}\, , \label{R2}
\end{eqnarray}
where ${ G}$ is the Gaussian curvature of $M$ and, in computing the last formula, we have used that $\langle [E_1,E_2],E_3\rangle=2{ r}$. Finally, putting $X={X}^h+\langle X, E_3\rangle E_3$, $Y={Y}^h+\langle Y, E_3\rangle E_3$,  $Z={Z}^h+\langle Z, E_3\rangle E_3$,  $W={W}^h+\langle W, E_3\rangle E_3$, using \eqref{R1}, \eqref{R2} and after a long computation, one has
\begin{eqnarray}\label{Curv}
\langle\overline{R}(X,Y)Z,W\rangle &=& ({ G}-3{ r}^2)\left\{\langle Y,Z\rangle \langle X, W\rangle-\langle X,Z\rangle \langle Y,W\rangle\right\}\nonumber \\
& - &({ G}-4 { r}^2)\left\{ \langle Y,E_3 \rangle\langle Z,E_3 \rangle \langle X, W\rangle  - \langle X,E_3 \rangle\langle Z,E_3 \rangle \langle Y,W\rangle\right. \nonumber \\
   & + & \left. \langle X,E_3 \rangle\langle Y,Z \rangle\langle E_3, W\rangle - \langle Y,E_3 \rangle\langle X,Z \rangle \langle E_3, W\rangle \right\} \label{curvature}\\
  & + & \langle Z ,J(W^h) \rangle J((X\wedge Y)^h)({ r}) + \langle X ,J(Y^h) \rangle J((Z\wedge W)^h)({ r}) \, . \nonumber
\end{eqnarray}
Finally, from \eqref{R1} and \eqref{R2} we obtain the components of the Ricci curvature tensor: 
\begin{equation}\label{ricci1}
\begin{array}{lll}
\ri(E_1,E_1)=G-2r^2, & \ri(E_2,E_2)=G-2r^2, & \ri(E_3,E_3)=2r^2,\\
\ri(E_1,E_2)=0, & \ri(E_1,E_3)=-\dfrac{r_y}{\lambda}, & \ri(E_2,E_3)=\dfrac{r_x}{\lambda}.
\end{array}
\end{equation}

\section{Fundamental equations for immersed surfaces into $\E(G,r)$}\label{sec-fund}

In \cite{Da}, Daniel studies surfaces of BCV-spaces using the Killing submersion structure of these spaces and emphasizing the importance of the angle $\phi$ between the normal vector field $\eta$ of the surface and the vertical Killing vector field  $\xi=\frac{\partial}{\partial z}$. In particular, he obtained the Gauss and Codazzi equations in terms of the function $\nu=\cos \phi$. In this section, we follow these ideas to obtain similar expressions of the Gauss and Codazzi equations for a surface immersed into a Killing submersion.\\

Consider a simple connected and oriented surface $S^2$ in  $\E(G,r)$. Let $\nabla$ denote the Levi-Civita connection of $S$, $\eta$ its unit normal vector field, $A$ the shape operator associated to $\eta$ and $K$ the Gaussian curvature of $S$. Let $\phi$ be the angle function between $\eta$ and $\xi$ defined by 
$$ \langle\xi,\eta\rangle=\cos\phi.$$

The projection of $\xi$ over the tangent plane of  $S$ yields 
\begin{equation}\label{decompositionXi}
\xi=T+\cos \phi\, \, \eta,
\end{equation}
where $T$ is the tangential component of $\xi$ that satisfies $\langle T,T \rangle=\sin^2 \phi$. Then $\{T, \eta\wedge T\}$ is a local orthogonal tangent frame on $S$ and, for convenience, in the sequel we shall use its orthonormalization (under the assumption that $\phi\neq 0$):
\begin{equation}\label{basee_i}
e_1=\frac{T}{\sin\phi}, \qquad e_2=\frac{\eta\wedge T}{\sin\phi}.
\end{equation}
Moreover, with respect to the adapted frame $\{e_1,e_2,\eta\}$, the Killing vector field $\xi$ becomes
	\begin{equation}\label{xi}
     \xi=\sin \phi\, e_1+ \cos \phi \, \eta.
	\end{equation}	

Now, we can enunciate the following proposition.
\begin{proposition}\label{GaussCodazzi}
	The Gauss and Codazzi equations for an immersed surface into $\E(G,r)$ with respect to the basis \eqref{basee_i} are given, respectively, by:
	\begin{equation}\label{gauss1}
	K= \det A+ r^2+(G-4 r^2)\,\cos^2 \phi-\sin(2 \phi) \,e_2(r),
	\end{equation}
	\begin{equation}\label{codazzi1}
	\begin{aligned}
	\nabla_{e_1} A (e_2) -\nabla_{e_2} A(e_1)-A[e_1,e_2] =& \big[( 4 r^2-G)\,\cos\phi\,\sin\phi -\cos(2 \phi)\,e_2(r)   \big]\,e_2\\
	&-e_1(r)\, e_1.
	\end{aligned}
	\end{equation}		
\end{proposition}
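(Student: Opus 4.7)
The strategy is to specialize the classical Gauss and Codazzi--Mainardi equations to the ambient $\E(G,r)$ by substituting the explicit curvature formula \eqref{Curv} and systematically decomposing every tangent vector into its horizontal and vertical parts with respect to the Killing submersion.

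First I would recall that, for an oriented orthonormal tangent frame $\{e_1,e_2\}$ on $S$ with unit normal $\eta$ and shape operator $A$, the general Gauss and Codazzi identities read
\[
K-\det A=\langle \overline{R}(e_1,e_2)e_2,e_1\rangle,\qquad -(\overline{R}(e_1,e_2)\eta)^\top=(\nabla_{e_1}A)(e_2)-(\nabla_{e_2}A)(e_1),
\]
and that by torsion-freeness of $\nabla$ one has $(\nabla_{e_1}A)(e_2)-(\nabla_{e_2}A)(e_1)=\nabla_{e_1}(Ae_2)-\nabla_{e_2}(Ae_1)-A[e_1,e_2]$, which is exactly the expression appearing on the left of \eqref{codazzi1}. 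The problem is thus reduced to evaluating the right-hand side of \eqref{Curv} on the quadruples $(e_1,e_2,e_2,e_1)$ and $(e_1,e_2,\eta,e_i)$ for $i=1,2$.

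For this I would use the inner products $\langle e_1,\xi\rangle=\sin\phi$, $\langle e_2,\xi\rangle=0$ and $\langle \eta,\xi\rangle=\cos\phi$ forced by \eqref{xi}, which give the horizontal projections
\[
e_1^h=e_1-\sin\phi\,\xi,\qquad e_2^h=e_2,\qquad \eta^h=\eta-\cos\phi\,\xi.
\]
From \eqref{j}, the rotation $J$ on horizontal vectors agrees with $V\mapsto \xi\wedge V$; working in the oriented frame $\{e_1,e_2,\eta\}$ then yields
\[
J(e_1^h)=\cos\phi\,e_2,\qquad J(e_2)=\sin\phi\,\eta-\cos\phi\,e_1,\qquad J(\eta^h)=-\sin\phi\,e_2.
\]
Moreover, since $r$ is constant along the fibers of $\pi$, $\xi(r)=0$ and therefore $V(r)=V^h(r)$ for every vector $V$.

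With these ingredients the substitutions become mechanical. For Gauss, the first two blocks of \eqref{Curv} contribute $(G-3r^2)-(G-4r^2)\sin^2\phi$, which collapses to $r^2+(G-4r^2)\cos^2\phi$, while the last block (after noting $(e_1\wedge e_2)^h=\eta^h$ and inserting the $J$-values above) produces $-\sin(2\phi)\,e_2(r)$; this gives \eqref{gauss1}. For Codazzi, I would compute the two tangential components $\langle \overline{R}(e_1,e_2)\eta,e_i\rangle$ separately: the first two blocks vanish along $e_1$ (since $\langle \eta,e_1\rangle=0$) and yield $(G-4r^2)\sin\phi\cos\phi$ along $e_2$, whereas the $J$-block supplies $e_1(r)$ along $e_1$ and $\cos(2\phi)\,e_2(r)$ along $e_2$. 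Negating the resulting tangential vector gives precisely \eqref{codazzi1}.

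The main obstacle is the careful bookkeeping of the last group of terms in \eqref{Curv}: for each of the two relevant quadruples one has to correctly identify the horizontal parts $(X\wedge Y)^h$ and $(Z\wedge W)^h$, apply $J$ consistently with the chosen orientation, and remember that ambient derivatives of $r$ reduce to derivatives along the horizontal lift of the projection. Once the three $J$-values above are tabulated, the rest of the argument is routine algebraic simplification and trigonometric identities.
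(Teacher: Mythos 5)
Your proposal is correct and follows essentially the same route as the paper's proof: both reduce the statement to evaluating the ambient curvature formula \eqref{Curv} on the quadruples $(e_1,e_2,e_2,e_1)$ and $(e_1,e_2,\eta,e_i)$, using the rotation values $J(e_1^h)=\cos\phi\,e_2$, $J(\eta^h)=-\sin\phi\,e_2$ and the fact that $\xi(r)=0$ (which is exactly what makes the paper's identity $\cos\phi\,J(e_2^h)(r)=-e_1(r)$ hold, since it forces $\eta(r)=-\tan\phi\,e_1(r)$). Your tabulated values and resulting components agree with the paper's \eqref{codazzi-components}, and your formulation of Codazzi via $-(\overline{R}(e_1,e_2)\eta)^{\top}$ is equivalent to the paper's $\langle\,\cdot\,,Z\rangle=\langle\overline{R}(e_1,e_2)Z,\eta\rangle$ by the curvature symmetries.
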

\begin{proof}
	For the Gauss equation, we recall that
	$$K=\det A+\langle \overline{R}(e_1,e_2)\,e_2,e_1\rangle.$$
	The term $\langle \overline{R}(e_1,e_2)\,e_2,e_1\rangle$ can be compute by using  \eqref{Curv}, which confers 
	$$	K= \det A+ r^2+(G-4 r^2)\cos^2 \phi+2\cos\phi \,J (\eta^h)(r), $$
	where $J (\eta^h)=-\sin\phi\,e_2$.
	So, we obtain \eqref{gauss1}.	
	In what concerns the Codazzi equation
	$$\big\langle \nabla_{e_1} A (e_2) -\nabla_{e_2} A (e_1)-A[e_1,e_2],Z\big \rangle= \langle \overline{R}(e_1,e_2)Z,\eta \rangle,$$
	from \eqref{Curv} it results that
	\begin{equation}\label{codazzi-components}
	\begin{cases}
	\big\langle \nabla_{e_1} A(e_2) -\nabla_{e_2} A(e_1)-A[e_1,e_2],e_1\big \rangle=&\cos \phi \, J(e_2^h)(r),\\
	\big\langle\nabla_{e_1} A(e_2) \nabla_{e_2} A(e_1)-A[e_1,e_2],e_2\big \rangle=&-(G-4r^2)\cos\phi \sin\phi \\&-J(\eta^h)(r)\sin\phi-J(e_1^h)(r)\cos\phi.
	\end{cases}
	\end{equation}
  Finally, using \eqref{j} it follows that $J(e_1^h)=\cos\phi\,e_2$ and $\cos\phi\,J(e_2^h)(r)=-e_1(r)$ that, substituted in \eqref{codazzi-components}, gives immediately \eqref{codazzi1}.
\end{proof}

To determine the shape operator and the Levi-Civita connection of a surface immersed into a Killing submersion $E(G,r)$, we need the following fact.
\begin{proposition}\label{compatibilidade}
	For any vector field $X$ tangent to a surface $S$ of a Killing submersion $\E(G,r)$, it holds that:
	\begin{equation}\label{prima}
	\nabla_X T=\cos \phi\,\big(A(X)-r\, \eta\wedge X\big),
	\end{equation}
	and
	\begin{equation}\label{seconda}
	\langle A(X)-r \, \eta\wedge X,T\rangle=-X(\cos\phi).
	\end{equation}
	
\end{proposition}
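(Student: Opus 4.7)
The plan is to compute $\overline{\nabla}_X\xi$ in two independent ways and then equate the tangent and normal components relative to $S$. The first way uses the Killing equation \eqref{bundle-curvature}, namely $\overline{\nabla}_X\xi=r\,(X\wedge\xi)$, which holds in particular for any $X$ tangent to $S$. The second way uses the orthogonal decomposition $\xi=T+\cos\phi\,\eta$ given in \eqref{decompositionXi}: applying the Gauss formula $\overline{\nabla}_X T=\nabla_X T+\langle A(X),T\rangle\,\eta$ and the Weingarten formula $\overline{\nabla}_X\eta=-A(X)$, I obtain
$$\overline{\nabla}_X\xi=\nabla_X T-\cos\phi\,A(X)+\bigl[\langle A(X),T\rangle+X(\cos\phi)\bigr]\,\eta.$$

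To expand the right-hand side of the Killing equation, I would write
$$X\wedge\xi = X\wedge T - \cos\phi\,(\eta\wedge X).$$
Since $X$ and $T$ are both tangent to $S$, the product $X\wedge T$ is proportional to $\eta$, and the cyclic property of the scalar triple product gives the identity $X\wedge T=\langle\eta\wedge X,T\rangle\,\eta$; on the other hand, $\eta\wedge X$ is tangent to $S$. Consequently,
$$r\,(X\wedge\xi) = -r\cos\phi\,(\eta\wedge X) + r\,\langle\eta\wedge X,T\rangle\,\eta,$$
which is the desired splitting into tangent and normal parts.

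Matching the tangent components of the two expressions for $\overline{\nabla}_X\xi$ yields
$$\nabla_X T-\cos\phi\,A(X)=-r\cos\phi\,(\eta\wedge X),$$
which rearranges to \eqref{prima}. Matching the normal components (the coefficient of $\eta$) yields $\langle A(X),T\rangle+X(\cos\phi)=r\,\langle\eta\wedge X,T\rangle$, which rearranges to $\langle A(X)-r\,\eta\wedge X,T\rangle=-X(\cos\phi)$, namely \eqref{seconda}.

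The whole argument is essentially mechanical once the two expressions for $\overline{\nabla}_X\xi$ are set side by side; the one point that needs a little care is the bookkeeping of the cross product on $\E$, in particular the identity $X\wedge T=\langle\eta\wedge X,T\rangle\,\eta$ and the fact that $\eta\wedge X$ is tangent while $X\wedge T$ is normal. This is where a sign error is most likely to creep in, but beyond this the matching of components is immediate and yields both \eqref{prima} and \eqref{seconda} simultaneously.
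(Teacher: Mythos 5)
Your proposal is correct and follows essentially the same route as the paper: both compute $\overline{\nabla}_X\xi$ once via the Killing identity $\overline{\nabla}_X\xi=r\,X\wedge\xi$ and once via the decomposition $\xi=T+\cos\phi\,\eta$ with the Gauss and Weingarten formulas, then match tangent and normal components. Your explicit justification of the triple-product identity $X\wedge T=\langle\eta\wedge X,T\rangle\,\eta$ is the same step the paper uses implicitly, and your signs all check out.
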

\begin{proof}
	On one hand, we have
	\begin{equation}\label{nabla-x-xi1}
	\begin{aligned}
	\ln_X \xi=&\ln_X(T+\cos \phi \,\, \eta)\\
	=&\nabla_X T+\, \langle A(X),T\rangle\, \eta+X(\cos\phi)\,\eta-\cos\phi A(X).
	\end{aligned}
	\end{equation}
	On the other hand,  from \eqref{bundle-curvature}
	\begin{equation}\label{nabla-x-xi2}
	\begin{aligned}
	\ln_X \xi
	=& r\, X\wedge \xi\\
	=& r\,(\langle \eta\wedge X,T\rangle\, \eta-\cos\phi \,\eta\wedge X).
	\end{aligned}
	\end{equation}
	The result follows comparing the tangent and the normal parts of \eqref{nabla-x-xi1} and \eqref{nabla-x-xi2}.
\end{proof}
Using Proposition~\ref{compatibilidade}, the matrix associated to the shape operator $A$, with respect to the basis \eqref{basee_i}, takes the following form:
\begin{equation}\label{A1}
A=\begin{pmatrix}
e_1(\phi) & e_2(\phi)- r \\
\ \ & \ \  \\
e_2(\phi)- r & \mu \\
\end{pmatrix},
\end{equation}
where, denoting by $H=\trace A$ the mean curvature function of $S$, we deduce that
$$\mu:=\langle A(e_2), e_2\rangle=  H-e_1(\phi).$$
Moreover, using again Proposition~\ref{compatibilidade}, the Levi-Civita connection $\nabla$ of $S$ is given by:
\begin{equation}\label{nabla-ei}
\begin{aligned}
&\nabla_{e_1} e_1=\cot\phi \,(e_2(\phi)-2 r)\, e_2 ,\qquad
\nabla_{e_2} e_1=\mu \cot\phi \, e_2,\\
&\nabla_{e_1} e_2=-\cot\phi\, (e_2(\phi)-2 r)\, e_1,
\qquad \nabla_{e_2} e_2=-\mu\, \cot\phi \, e_1.
\end{aligned}
\end{equation}

\section{Biharmonic constant mean curvature surfaces}
In this section, we investigate  proper biharmonic constant mean curvature (CMC) surfaces in the total space of a Killing submersion. 
In the case of a surface $S^{2}\imto N^3$ in a $3$-dimensional space, the decomposition of the bitension  field  \eqref{bitensionfield1}, with respect to its normal and tangential components  (see, for example, \cite{C1,Ou1}),  leads to the fact that $S^{2}$ is biharmonic into $N^3$ if and only if the mean curvature function $H=\trace A$ satisfies the system
	\begin{equation}\label{DecomposicaoBitensao}
	\begin{cases}
	\Delta H +H\, |A|^2-H\, \ri(\eta,\eta)=0\\
	2 A (\grad{H})+ H\, \grad{H}- 2H\, \ri(\eta)^{\top}=0,
	\end{cases}
	\end{equation}
where $A$ is the shape operator and $\ri (\eta)^{\top}$ is the tangent component of the Ricci curvature of $N$ in the direction of  the vector field $\eta$.

Let now $S$  be an oriented, simply connected surface of a Killing submersion $\E(G,r)$ with constant mean curvature. Without loss of generality, since our study is local, we can identify $\E(G,r)$ with the canonical model described in Example~\ref{defi:canonical}. Let $\{e_1,e_2,\eta\}$ be a local frame adapted to the surface $S$ (where  $\{e_1,e_2\}$ are not necessarily the frame defined in \eqref{basee_i}). Then, with respect to the orthonormal frame  \eqref{eqn:base-universal-killing}, we have
 \begin{equation}\label{basis}
e_1=\sum a_i\,E_i,\qquad e_2=\sum b_i\,E_i,\qquad \eta=\sum c_i\,E_i,
\end{equation} 
where $a_i, b_i, c_i, i=1,2,3$, are smooth functions locally defined on $S$. 

Using the above adapted frame, the condition of biharmonicity of the surface $S$ can be explicitly formulated as described in the following proposition.
\begin{proposition}\label{propsisp}
A CMC surface $S$ in a Killing submersion $\E(G,r)$ is proper biharmonic if and only if 
$H\neq 0$ and the components given in \eqref{basis} of the local frame $\{e_1,e_2,\eta\}$ 
satisfy
\begin{equation}\label{sistemap}
\left\{\begin{aligned}
&(G-2r^2)+(4r^2-G)\,c_3^2+\frac{2c_3\,(c_2\,r_x-c_1\,r_y)}{\lambda}-|A|^2=0,\\
&(4r^2-G)\,c_3\,a_3+\frac{[r_x\,(c_2\,a_3+c_3\,a_2)-r_y\,(c_1\,a_3+c_3\,a_1)]}{\lambda}=0,\\
&(4r^2-G)\,c_3\,b_3+\frac{[r_x\,(c_2\,b_3+c_3\,b_2)-r_y\,(c_1\,b_3+c_3\,b_1)]}{\lambda}=0.\\
\end{aligned}
\right.
\end{equation}
\end{proposition}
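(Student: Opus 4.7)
The plan is to specialize the system \eqref{DecomposicaoBitensao} to the CMC setting and then rewrite the two resulting conditions using the Ricci components \eqref{ricci1} of $\E(G,r)$ expressed through the decomposition \eqref{basis} of $\eta$, $e_1$, $e_2$ in the canonical frame $\{E_1,E_2,E_3\}$.

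First, since $H$ is constant, $\grad H = 0$ and $\Delta H = 0$, so \eqref{DecomposicaoBitensao} collapses to
\begin{equation*}
H\bigl(|A|^2 - \mathrm{Ricc}(\eta,\eta)\bigr)=0,\qquad H\,\mathrm{Ricc}(\eta)^{\top}=0.
\end{equation*}
The biharmonic CMC surface is proper (i.e.\ not minimal, hence not harmonic as an isometric immersion) exactly when $H\neq 0$, in which case the two conditions above become
\begin{equation*}
|A|^2=\mathrm{Ricc}(\eta,\eta),\qquad \mathrm{Ricc}(\eta,e_1)=0,\qquad \mathrm{Ricc}(\eta,e_2)=0.
\end{equation*}

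Next I would compute these three scalars using \eqref{ricci1}. Writing $\eta=\sum c_iE_i$ and recalling the diagonal values $\mathrm{Ricc}(E_1,E_1)=\mathrm{Ricc}(E_2,E_2)=G-2r^2$, $\mathrm{Ricc}(E_3,E_3)=2r^2$, the vanishing of $\mathrm{Ricc}(E_1,E_2)$, and $\mathrm{Ricc}(E_1,E_3)=-r_y/\lambda$, $\mathrm{Ricc}(E_2,E_3)=r_x/\lambda$, a direct expansion gives
\begin{equation*}
\mathrm{Ricc}(\eta,\eta)=(G-2r^2)(c_1^2+c_2^2)+2r^2c_3^2+\frac{2c_3(c_2r_x-c_1r_y)}{\lambda}.
\end{equation*}
Using $c_1^2+c_2^2+c_3^2=1$ to replace $c_1^2+c_2^2=1-c_3^2$ yields exactly the first equation in \eqref{sistemap}.

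For the tangential conditions I would similarly expand $\mathrm{Ricc}(\eta,e_1)$ and $\mathrm{Ricc}(\eta,e_2)$ with $e_1=\sum a_iE_i$ and $e_2=\sum b_iE_i$, obtaining for instance
\begin{equation*}
\mathrm{Ricc}(\eta,e_1)=(G-2r^2)(c_1a_1+c_2a_2)+2r^2c_3a_3+\frac{r_x(c_2a_3+c_3a_2)-r_y(c_1a_3+c_3a_1)}{\lambda}.
\end{equation*}
The key simplification is the orthogonality $\langle\eta,e_1\rangle=0$, which gives $c_1a_1+c_2a_2=-c_3a_3$; substituting collapses the first two terms to $(4r^2-G)c_3a_3$, producing the second equation in \eqref{sistemap}. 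The identical manipulation with $b_i$ in place of $a_i$ yields the third equation, and conversely the three equations recover the vanishing of $|A|^2-\mathrm{Ricc}(\eta,\eta)$ and $\mathrm{Ricc}(\eta)^{\top}$.

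There is no real obstacle here: the argument is essentially bookkeeping once \eqref{ricci1} and the CMC reduction are in place. The only point that requires a little care is making systematic use of the orthonormality relations $c_1a_1+c_2a_2+c_3a_3=0$ and $c_1b_1+c_2b_2+c_3b_3=0$ so that the $(G-2r^2)$ terms combine cleanly with the $2r^2$ terms into $(4r^2-G)$, which is what makes the tangential equations take the form displayed in \eqref{sistemap}.
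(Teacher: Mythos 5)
Your proposal is correct and follows essentially the same route as the paper: the authors likewise reduce \eqref{DecomposicaoBitensao} under $\Delta H=0$, $\grad H=0$, $H\neq 0$ to $|A|^2=\ri(\eta,\eta)$ and $\ri(\eta)^{\top}=0$, and then expand these via \eqref{ricci1} and \eqref{basis}. Your explicit use of the orthonormality relations $c_1a_1+c_2a_2=-c_3a_3$ and $c_1^2+c_2^2=1-c_3^2$ to produce the $(4r^2-G)$ coefficients simply spells out what the paper calls a ``straightforward computation.''
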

\begin{proof}
A straightforward computation, using \eqref{ricci1} and the basis \eqref{basis}, gives 
\begin{equation}\label{tangentR}
\begin{aligned}
(\ri(\eta))^{T}=&\sum_{i=1}^2\ri(\eta,e_i)\,e_i\\=&\bigg((4r^2-G)\,c_3\,a_3+\frac{[r_x\,(c_2\,a_3+c_3\,a_2)-r_y\,(c_1\,a_3+c_3\,a_1)]}{\lambda}\bigg)\,e_1\\
&+\bigg((4r^2-G)\,c_3\,b_3+\frac{[r_x\,(c_2\,b_3+c_3\,b_2)-r_y\,(c_1\,b_3+c_3\,b_1)]}{\lambda}\bigg)\,e_2\\
:=&\tilde{A}\,e_1+\tilde{B}\,e_2
\end{aligned}
\end{equation}
and
\begin{equation}\label{etaeta}
\begin{aligned}
\ri(\eta,\eta)&=(G-2r^2)+(4r^2-G)\,c_3^2+\frac{2c_3\,(c_2\,r_x-c_1\,r_y)}{\lambda}.
\end{aligned}
\end{equation} 
So, from \eqref{DecomposicaoBitensao}, it follows that the surface $S$ is proper biharmonic if and only if the system 
\begin{equation*}
\left\{\begin{aligned}
&(G-2r^2)+(4r^2-G)\,c_3^2+\frac{2c_3\,(c_2\,r_x-c_1\,r_y)}{\lambda}-|A|^2=0,\\
&\tilde{A}=0,\\
&\tilde{B}=0,\\
\end{aligned}
\right.
\end{equation*}
is satisfied. 
\end{proof}
\begin{remark}\label{zero}
Note that there is no proper biharmonic CMC surface $S$ in a Killing submersion $\E(G,r)$ with  $\phi\equiv 0$  on a open subset of $S$. In fact, if $\phi\equiv 0$, we have that $\xi=\eta$ (i.e. $c_3=1$) on the open subset. Therefore, 
  the distribution determined by $E_1$ and $E_2$ should be integrable and, hence,  involutive.
		As $$[E_1,E_2]=\frac{\lambda_x}{\lambda^2}\,E_1+\frac{\lambda_y}{\lambda^2}\,E_2+2r\,E_3,$$ it follows that $r=0$. Substituting in the first equation of \eqref{sistemap}, we have that $|A|=0$,
		i.e. the surface is totally geodesic and, consequently, minimal. 
\end{remark}
\begin{remark}
From Proposition~\ref{propsisp} and 
$$
\grad(r)=E_1(r)E_1+E_2(r)E_2=\frac{r_x}{\lambda}\,E_1+\frac{r_y}{\lambda}\,E_2,
$$
we deduce that if $S$ is a proper biharmonic surface in $\E(G,r)$, then
\begin{equation}\label{nulla}
0=\tilde{A}\,b_3-\tilde{B}\,a_3=\frac{c_3\,(c_1\,r_x+c_2\,r_y)}{\lambda}=\cos\phi\, \langle \grad(r), \eta\rangle.
\end{equation}
 Now,
assume that $\phi\neq\pi/2,0$ on a open set of $S$ and that $\grad(r)\neq 0$ on the same open set. Then, $\grad(r)$ is locally tangent to the surface $S$.  Consequently, we can choose the local orthonormal adapted frame $\{e_1,e_2,\eta\}$  such that $\{e_1,e_2\}$ is the frame defined in \eqref{basee_i} and
\begin{equation}\label{base-b1}
e_2=\frac{\grad(r)}{|\grad(r)|}.
\end{equation}
In fact, with this choice for $e_2$, using the notation of  \eqref{basis},  we have:
$$b_1=\frac{r_x}{\lambda\,|\grad(r)|},\quad b_2=\frac{r_y}{\lambda\,|\grad(r)|},\quad b_3=0,$$  Moreover, since $b_3=0$,
$a_3^2+c_3^2=1$, as $c_3=\cos\phi$, we must have that $a_3=\sin\phi$.  Then, it is easy to check that: 
\begin{equation}\label{base-b2}
\left\{\begin{aligned}
e_1&=-\cos\phi\,Je_2+\sin\phi\,\xi,\\
\eta&=\sin\phi\,Je_2+\cos\phi\,\xi,\\
\end{aligned}
\right.
\end{equation}
from which it follows that $\sin\phi \,e_1=-\eta \cos\phi +\xi$ and, consequently, $\sin\phi\, e_1=T$.
Finally, 
system~\eqref{sistemap} becomes
\begin{equation}\label{eq-due}
\left\{\begin{aligned}
&G-2r^2+(4r^2-G)\,\cos^2\phi+|\grad(r)|\,\sin (2\phi)-|A|^2=0,\\
&(G-4r^2)\frac{\sin(2\phi)}{2}+|\grad(r)|\,\cos(2\phi)=0.\\
\end{aligned}
\right.
\end{equation}
\end{remark}

As a first consequence of \eqref{eq-due}, we have the following non existence result of proper biharmonic CMC surfaces.

\begin{proposition}\label{propRconst}
Let $S$ be a CMC surface in a Killing submersion $\E(G,r)$ and assume that $G-4r^2\equiv0$ and $|\grad (r)|\neq 0$ on $S$. Then, $S$ cannot be proper biharmonic. 
\end{proposition}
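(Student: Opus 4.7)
My plan is to argue by contradiction, exhausting the possible behavior of the angle $\phi$ on $S$.

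First, suppose that $\phi(p)\in\{0,\pi/2\}$ at every point $p\in S$. By continuity and connectedness, $\phi$ is then globally constant. The value $\phi\equiv 0$ is ruled out by Remark~\ref{zero}. If $\phi\equiv \pi/2$, then $c_3=\cos\phi=0$, and the first equation of \eqref{sistemap} with $G=4r^2$ simplifies to $|A|^2=2r^2$; on the other hand, \eqref{A1} with $\phi$ constant yields $A=\begin{pmatrix} 0 & -r\\ -r & H\end{pmatrix}$, hence $|A|^2=2r^2+H^2$, which forces $H=0$ and contradicts properness.

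We may therefore assume that there is a point of $S$ at which $\phi\neq 0,\pi/2$, and hence an open neighborhood $V$ on which the same strict inequalities hold. On $V$ the system \eqref{eq-due} applies. Substituting $G=4r^2$, the second equation becomes $|\grad(r)|\cos(2\phi)=0$, and since $|\grad(r)|\neq 0$ this forces $\cos(2\phi)=0$, so $\phi$ is locally constant, equal either to $\pi/4$ or to $3\pi/4$. With $\phi$ constant, \eqref{A1} again gives $|A|^2=2r^2+H^2$, and the first equation of \eqref{eq-due} reduces to $H^2=|\grad(r)|\sin(2\phi)$. Only $\phi=\pi/4$ is compatible with $|\grad(r)|>0$, yielding $H^2=|\grad(r)|$.

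To eliminate the remaining subcase I would invoke the Codazzi equation \eqref{codazzi1}. With $\phi=\pi/4$, $G=4r^2$, and $e_2=\grad(r)/|\grad(r)|$ (so that $e_1(r)=0$), the right-hand side of \eqref{codazzi1} vanishes. Expanding the left-hand side using the shape operator above and the connection formulas \eqref{nabla-ei} at $\cot\phi=1$ is a short computation: the $e_1$-component cancels identically, while the $e_2$-component simplifies to $(4r^2+H^2)\cot\phi+|\grad(r)|$. Setting this equal to zero gives $4r^2+H^2+|\grad(r)|=0$, which is absurd since each term on the left is non-negative and $|\grad(r)|>0$. The main obstacle is precisely this Codazzi computation: the biharmonicity system \eqref{eq-due} alone remains consistent in the $\phi=\pi/4$ subcase (it leaves $H^2=|\grad(r)|$ as a formal possibility), so Codazzi is the essential extra ingredient that closes the argument.
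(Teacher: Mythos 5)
Your proof is correct, and up to the closing step it follows the same path as the paper's: both arguments use the second equation of \eqref{eq-due} with $G=4r^2$ to force $\cos(2\phi)=0$, hence $\phi$ locally constant in $\{\pi/4,3\pi/4\}$; both then read off $|A|^2=2r^2+H^2$ from \eqref{A1} at constant $\phi$, discard $\phi=3\pi/4$ by positivity, and arrive at $H^2=|\grad(r)|$. Where you genuinely diverge is the final contradiction. The paper does not invoke Codazzi here: it notes that $|\grad(r)|=H^2$ is \emph{constant} and applies the elementary bracket identity $e_1e_2(r)-e_2e_1(r)=(\nabla_{e_1}e_2-\nabla_{e_2}e_1)(r)$ together with \eqref{nabla-ei}, obtaining $0=e_1(|\grad(r)|)=-\mu\,|\grad(r)|$ with $\mu=H\neq 0$, contradicting $|\grad(r)|\neq 0$. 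You instead evaluate the Codazzi equation \eqref{codazzi1}, and your computation checks out: with the frame of \eqref{base-b1}, constant $\phi$ and $e_1(r)=0$, the right-hand side of \eqref{codazzi1} vanishes, the $e_1$-component of the left-hand side cancels, and the $e_2$-component is $(4r^2+H^2)\cot\phi+|\grad(r)|$, which at $\phi=\pi/4$ gives the absurdity $4r^2+H^2+|\grad(r)|=0$. This is in fact exactly what the paper's identity \eqref{codazzie_2} (derived later, in the proof of b2)) reduces to at constant $\phi$, so your route amounts to deploying the b2) machinery early; its advantage is that the sign contradiction is immediate and does not use the constancy of $|\grad(r)|$, whereas the paper's bracket trick is cheaper, needing only the connection \eqref{nabla-ei}. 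You are also more careful than the paper at the start: you dispose of $\phi\equiv 0$ (via Remark~\ref{zero}) and $\phi\equiv\pi/2$ (via \eqref{sistemap} and \eqref{A1}, forcing $H=0$) before invoking \eqref{eq-due}, which the paper's proof tacitly assumes is applicable, i.e.\ $\phi\neq 0,\pi/2$; the only residual value, $\phi=\pi$, is ignored by both you and the paper and reduces to $\phi=0$ after reversing the orientation of $\eta$. Your identification of Codazzi as the ``essential extra ingredient'' should thus be softened: some integrability input beyond \eqref{eq-due} is indeed needed, but the paper shows the weaker Schwarz/bracket identity already suffices.
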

\begin{proof}
Under the hypothesis, the second equation of system~\eqref{eq-due} implies that $\cos(2\phi)=0$ 
(i.e. $\phi\in\{\pi/4,3\pi/4\}$),
and the first equation of the system~\eqref{eq-due} reduces to
$$
|A|^2=2r^2\pm|\grad(r)|.
$$
On the other hand, from \eqref{A1} with $\phi$ constant, we have 
$$
|A|^2=2r^2+\mu^2, \qquad H=\mu\neq 0.
$$
Then, the case $\phi=3\pi/4$ does not occur and $|\grad(r)|=H^2=\text{constant}$. Since 
$e_1(r)=0$ and $e_2(r)=|\grad(r)|$, we conclude, using $e_1\,e_2(r)-e_2\,e_1(r)=(\nabla_{e_1}e_2-\nabla_{e_2}e_1)(r)$ and \eqref{nabla-ei}, that
	\begin{equation}\label{e1grad}
	0=e_1(|\mathrm{grad(r)}|)=- \mu\,|\grad(r)|,
	\end{equation}
	contradicting the hypothesis that $|\grad (r)|\neq 0$.
\end{proof}


\section{The proof of Theorem~\ref{main-theorem}}


\subsection{Proof of a)} Under the condition $\phi\equiv \pi/2$, system~\eqref{sistemap} becomes:
\begin{equation}\label{norma}
\left\{
\begin{aligned}
&|A|^2=G-2r^2,\\
&a_3\,(c_2\,r_x-c_1\,r_y)=0,\\
&b_3\,(c_2\,r_x-c_1\,r_y)=0.
\end{aligned}
\right.
\end{equation}
From the last two equations of \eqref{norma}, we obtain that
 \begin{equation}\label{orto}
 c_2\,r_x-c_1\,r_y=0.
 \end{equation}
Since $\eta$ is orthogonal to $\xi$, we can suppose that the basis $\{e_1,e_2,\eta\}$ is given by 
$$e_1=\tilde{a}\,E_1+\tilde{b}\,E_2,\qquad e_2=\xi,\qquad \eta=\tilde{b}\,E_1-\tilde{a}\,E_2,$$
 with $\tilde{a}^2+\tilde{b}^2=1$. To prove that the surface is locally a Hopf tube, it is sufficient to show that $e_1$ and $\eta$ are constant along the orbits of the Killing vector field $\xi$, that is $e_2(\tilde{a})=e_2(\tilde{b})=0$. To this end, we first observe that \eqref{eqn:killing-conexion} yields 
\begin{equation}\label{unoeta}
\begin{aligned}
\ln_{e_1}\eta&=\bigg(e_1(\tilde{b})+\frac{\tilde{a}\,(\tilde{b}\,\lambda_x-\tilde{a}\,\lambda_y)}{\lambda^2}\bigg)\,E_1\\&
+\bigg(-e_1(\tilde{a})+\frac{\tilde{b}\,(\tilde{b}\,\lambda_x-\tilde{a}\,\lambda_y)}{\lambda^2}\bigg)\,E_2-r\,E_3
\end{aligned}
\end{equation}
and
\begin{equation}\label{dueeta}
\ln_{e_2}\eta=e_2(\tilde{b})\,E_1-e_2(\tilde{a})\,E_2-r\,e_1.
\end{equation}
Also, the condition $\tilde{a}^2+\tilde{b}^2=1$ gives 
$$e_2(\tilde{a})=-\tilde{a}\tilde{b}\,e_2(\tilde{b})+\tilde{b}^2\,e_2(\tilde{a}), \qquad  e_2(\tilde{b})=-\tilde{a}\tilde{b}\,e_2(\tilde{a})+\tilde{a}^2\,e_2(\tilde{b})$$ 
and, so
$$e_2(\tilde{b})\,E_1-e_2(\tilde{a})\,E_2=(\tilde{a}\,e_2(\tilde{b})-\tilde{b}\,e_2(\tilde{a}))\,e_1.$$
Thus, substituting in \eqref{unoeta} and \eqref{dueeta}, it results that
\begin{equation}\label{eq-nabla-ei-eta}
\begin{aligned}
\ln_{e_1}\eta&=\bigg(\tilde{a}\,e_1(\tilde{b})-\tilde{b}\,e_1(\tilde{a})+\frac{\tilde{b}\lambda_x-\tilde{a}\lambda_y}{\lambda^2}\bigg)\,e_1-r\,e_2,\\
\ln_{e_2}\eta&=(\tilde{a}\,e_2(\tilde{b})-\tilde{b}\,e_2(\tilde{a})-r)\,e_1.
\end{aligned}
\end{equation}
By using \eqref{eq-nabla-ei-eta}, we obtain $$r=-\langle\ln_{e_1}\eta,e_2\rangle=-\langle\ln_{e_2}\eta,e_1\rangle=r-\tilde{a}\,e_2(\tilde{b})+\tilde{b}\,e_2(\tilde{a}).$$ The latter implies $\tilde{a}\,e_2(\tilde{b})+\tilde{b}\,e_2(\tilde{a})=0$
that, together with $\tilde{a}\,e_2(\tilde{a})+b\,e_2(\tilde{b})=0$, imply
$e_2(\tilde{a})=e_2(\tilde{b})=0$,
as desired. Then, the statement a) follows from Theorem~\ref{teo-hopf-tube}.

%
%
%
%
%
%
%
  \begin{remark}\label{r-G-constants}
If $\phi\neq \pi/2$, locally the surface can be considered as a graph over an open set $\Omega$  of the base space $M \subset \E(G,r)$:
  	$$S(x,y)=(x,y,z(x,y)), \qquad (x,y)\in \Omega.$$
As the functions $r$ and $G$ are constant with respect to the $z$-coordinate, we conclude that if $r$ and $G$ are constant along the surface, then they must be constant on a open set  of $\E(G,r)$. 
  \end{remark}
%

 \subsection{Proof of b1)}
 With the condition $r\equiv \text{constant}$ on $S$, 	system~\eqref{sistemap}  becomes
	\begin{equation}\label{sys-r-const}
	\left\{\begin{aligned}
	&(G-2r^2)+(4r^2-G)\,c_3^2-|A|^2=0,\\
	&a_3\,c_3\,(4r^2-G)=0,\\
	&b_3\,c_3\,(4r^2-G)=0.\\
	\end{aligned}
	\right.
	\end{equation}
Therefore, we have two possibilities.
	 \begin{enumerate}
		\item If $G\neq 4r^2$ over $S$, then $a_3=b_3=0$ and $|c_3|=1$. So $\text{Span}\{e_1,e_2\}=\text{Span}\{E_1,E_2\}$. Consequently, the distribution determined by $E_1$ and $E_2$ is integrable and, hence, is involutive.
		As $$[E_1,E_2]=\frac{\lambda_x}{\lambda^2}\,E_1+\frac{\lambda_y}{\lambda^2}\,E_2+2r\,E_3,$$ it follows that $r=0$. Thus, the first equation of \eqref{sys-r-const} implies that $|A|=0$,
		i.e. the surface is totally geodesic and, consequently, it's minimal.
		
		\item If $G=4r^2\equiv\text{constant}$ over $S$, from the Remark~\ref{r-G-constants} we conclude that $r$ and $G$ are constant on a open set $U$ of $\E(G,r)$. This implies that $U$ is an open set of a Bianchi-Cartan-Vranceanu space with $G=4r^2$. From the classification of the Bianchi-Cartan-Vranceanu spaces, we deduce that $U$ is an open set of either $\r^3$ or $\s^3$. Moreover, system~\eqref{sys-r-const} gives $|A|^2=2r^2$. Now the result announced in b1) follows immediately from the non existence of proper biharmonic surfaces in $\r^3$ (see \cite{CI}) and the classification of proper biharmonic surfaces in $\s^3$ given in  \cite{CMO}.  
	\end{enumerate}

\subsection{Proof of b2)}

First of all, with respect to the adapted local frame  $\{e_1,e_2,\eta\}$  described in \eqref{base-b1} and \eqref{base-b2}, the Gauss and Codazzi equations of Proposition~\ref{GaussCodazzi}, taking into account \eqref{base-b2}, are given, respectively,  by:
\begin{equation}\label{gauss2}
K= \det A-|A|^2+G-r^2,
\end{equation}
\begin{equation}\label{codazzi2}
\begin{aligned}
\nabla_{e_1} A (e_2) -\nabla_{e_2} A (e_1)=A[e_1,e_2] .
\end{aligned}
\end{equation}		


Now, according to Proposition~\ref{propRconst}, we can assume that $G-4r^2\neq 0$ on $S$.
%
The second equation of \eqref{eq-due} gives immediately that $\tan(2\phi)={2|\grad(r)|}/({4r^2-G})$, from which we can eliminate $G$ from the first equation of \eqref{eq-due} which becomes equivalent to
	\begin{equation}\label{e1-norma}
	\left\{\begin{aligned}
	&|A|^2=2r^2+|\grad(r)|\,\tan\phi,\\
   &\tan(2\phi)=\frac{2|\grad(r)|}{4r^2-G}.
   \end{aligned}
    \right.
    \end{equation}

Taking into account  \eqref{A1} and $\mu=H-e_1(\phi)$, we have that
\begin{equation}\label{e-norma}
|A|^2=2\,( e_1(\phi)^2+e_2(\phi)^2)+H^2+2 r^2-4r\, e_2(\phi)-2H\,e_1(\phi).
\end{equation}
A straightforward computation, using \eqref{A1} and \eqref{nabla-ei}, shows that the $e_2$ component of the Codazzi equation~\eqref{codazzi2} is given by:
\begin{equation}\label{codazzie_2}
\begin{aligned}
2\big(e_1(\phi)^2+e_2(\phi)^2\big)+H^2-3H\, e_1(\phi)-6r\, e_2(\phi)+4r^2&\\
-\tan(\phi)\big(e_2\,e_2(\phi)+e_1\,e_1(\phi)-|\grad(r)|\big)&=0.
\end{aligned}
\end{equation}
Then, combining \eqref{e-norma} and \eqref{codazzie_2}, we get
\begin{equation}\label{normaA}
|A|^2=\tan\phi\,\big(e_1e_1(\phi)
+e_2e_2(\phi)-|\grad(r)|\big)-2r^2+2 r\, e_2(\phi)+H\,e_1(\phi),\end{equation}
that, together with \eqref{e1-norma}, turns over 
\begin{equation}\label{co1}
2\,|A|^2=\tan\phi\,(e_1e_1(\phi)+e_2e_2(\phi))+2 r\,e_2(\phi)+H\,e_1(\phi).\end{equation}
So, \eqref{A-phi} is obtained observing that
\begin{equation}\label{co2}
\begin{aligned}
\Delta(\phi)&=e_1e_1(\phi)+e_2e_2(\phi)-(\nabla_{e_1}e_1)(\phi)-(\nabla_{e_2}e_2)(\phi)\\
&=e_1e_1(\phi)+e_2e_2(\phi)-\cot(\phi)\,\big[|\grad(\phi)|^2-\big(2r\, e_2(\phi)+H\,e_1(\phi)\big)\big].
\end{aligned}
\end{equation}

\section{Biharmonic Hopf tubes}
Following \cite{Ou1}, let $S$ be an Hopf tube of a  Riemannian submersion $\pi:(N^3,g)\to (M^2,h)$ with totally geodesic fibers. Denote by $\alpha: I \to (M^2,h)$ the base curve of $S$, by $\beta:I \to (N^3,h)$ its horizontal lift and consider the Frenet frame of $\beta$ given by $\{e_1:=\beta',e_2,\eta\}$, with $\eta$ being the unit normal vector of $S$. Then, it is proved in \cite{Ou1} that $S$ is proper biharmonic if and only if 
	\begin{equation}\label{sys-Ou}
	\left\{\begin{aligned}
	&\kappa''_g-\kappa_g(\kappa_g^2+2\tau_g^2)+\kappa_g\ri^N(\eta,\eta)=0,\\
	&3\kappa'_g\kappa_g-\kappa_g \ri^N(\eta,e_1)=0,\\
	&\kappa'_g \, \tau'_g+\kappa_g \ri^N(\eta,e_2)=0,	
	\end{aligned}
	\right.
	\end{equation}
	where $\kappa_g$ and $\tau_g$ are the geodesic curvature and geodesic torsion of $\beta$.
	
	Using \eqref{sys-Ou} we obtain the following classification of proper biharmonic Hopf tubes of a Killing submersion.
\begin{theorem}\label{teo-hopf-tube}
	Let $S$ be a Hopf tube of a Killing submersion $\E(G,r)$. Then, $S$ is proper biharmonic if and only if $G$, $r$ and the mean curvature function $H$ are constant along $S$ and they satisfy
$$
H^2=G-4r^2.
$$
\end{theorem}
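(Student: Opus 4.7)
The plan is to derive the biharmonicity conditions directly from the system \eqref{sys-Ou}, after expressing $\kappa_g$, $\tau_g$, and the three Ricci terms appearing there in the structural language of the submersion $\pi:\E(G,r)\to M$.

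First, I would set up an adapted Frenet-type frame on the Hopf tube $S = \pi^{-1}(\alpha)$. Since $\xi$ is tangent to $S$ and the unit normal $\eta$ is horizontal, the frame in \eqref{sys-Ou} must have $e_1 = \beta'$ horizontal and $e_2 = e_1 \wedge \eta$ vertical, so $e_2 = \pm\xi$. Using the connection formulas \eqref{eqn:killing-conexion}, a direct computation shows that $\overline{\nabla}_{\beta'}\beta'$ is horizontal (the contributions from $r\,E_3$ cancel), giving $\overline{\nabla}_{\beta'}\beta' = \kappa_g\,\eta$, where $\kappa_g$ coincides (up to sign) with the geodesic curvature of $\alpha$ in the base $M$. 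The companion computation produces $\overline{\nabla}_{\beta'}\eta = -\kappa_g\,\beta' + \tau_g\,e_2$ with $\tau_g = \pm r$. Specializing \eqref{A1} to $\phi \equiv \pi/2$, the shape operator in the basis $\{\beta',\xi\}$ takes the form
\[
A = \begin{pmatrix} \kappa_g & r \\ r & 0 \end{pmatrix},
\]
so that $H = \trace A = \kappa_g$ and $|A|^2 = \kappa_g^2 + 2r^2$. Next, I would read off the three Ricci entries appearing in \eqref{sys-Ou} from \eqref{ricci1}: since $\eta$ and $e_1$ are both horizontal with $\ri(E_1,E_1) = \ri(E_2,E_2) = G - 2r^2$ and $\ri(E_1,E_2) = 0$, one gets $\ri^N(\eta,\eta) = G - 2r^2$ and $\ri^N(\eta,e_1) = 0$, while the remaining entry $\ri^N(\eta,e_2) = \pm\ri^N(\eta,E_3)$ simplifies via the off-diagonal entries $\ri(E_i,E_3)$ of \eqref{ricci1} to $\pm e_1(r)$.

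With these ingredients in hand, the three equations of \eqref{sys-Ou} unfold as follows. The second equation reduces to $3\kappa_g\kappa'_g = 0$, and since $H = \kappa_g \neq 0$ on a proper biharmonic surface, $\kappa'_g \equiv 0$; hence $H$ is constant on $S$. The third equation then collapses to $\kappa_g\,\ri^N(\eta,e_2) = 0$, forcing $e_1(r) = 0$; combined with the fact that $r$ is constant along the fibers of $\pi$, this yields that $r$ is constant on $S$. Finally, with $\kappa''_g = 0$ and $\tau_g = r$, the first equation simplifies to $\kappa_g^2 + 2r^2 = G - 2r^2$, i.e.\ $H^2 = G - 4r^2$, and the constancy of $G$ follows from the identity $G = H^2 + 4r^2$. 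The converse implication is an immediate check that \eqref{sys-Ou} holds whenever $H$, $r$, $G$ are constant on $S$ with $H^2 = G - 4r^2$.

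The main technical obstacle will be the careful derivation of the identifications $H = \kappa_g$ and $\tau_g = r$, which amounts to a bookkeeping exercise with the sign conventions in the Frenet-type frame and the connection \eqref{eqn:killing-conexion}; once these are in place, the rest of the argument is purely algebraic manipulation of the three equations of \eqref{sys-Ou}.
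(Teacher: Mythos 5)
Your proposal is correct and follows essentially the same route as the paper: both substitute into Ou's system \eqref{sys-Ou} after computing $H=\kappa_g$, $\tau_g=\pm r$, $\ri(\eta,\eta)=G-2r^2$, $\ri(\eta,e_1)=0$ and $\ri(\eta,e_2)=\mp e_1(r)$ from \eqref{ricci1} in the adapted horizontal/vertical frame, then read off $\kappa_g'\equiv 0$, $e_1(r)=0$ (hence $r$, and then $G=H^2+4r^2$, constant along $S$), and $H^2=G-4r^2$. Your only cosmetic deviations — obtaining the shape operator by specializing \eqref{A1} to $\phi\equiv\pi/2$ (mind that the basis there is $\{T/\sin\phi,\,\eta\wedge T/\sin\phi\}$, so the roles of the vertical and horizontal directions are swapped relative to $\{\beta',\xi\}$, affecting only signs) and writing $\ri(\eta,e_2)$ as $\pm e_1(r)$ instead of $-(x'r_x+y'r_y)$ — do not change the argument.
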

\begin{proof}
	Let $\alpha(s)=(x(s),y(s))$ be a curve parametrized by arc length in $M^2$, with geodesic curvature $\kappa_g$. Taking the horizontal lifts of the tangent and the normal vectors fields of $\alpha$ we have, respectively,
	$$e_1=\lambda\, x'\,E_1+\lambda\,y'\,E_2, \qquad \eta=\lambda\, y'\,E_1-\lambda\,x'\,E_2,$$ that, together with $e_2=E_3$, form an orthonormal frame adapted to the Hopf tube.
	
    In this basis, the normal and tangent components of  $\ri(\eta)$ are  
	\begin{equation}\label{ricci-2}
	\ri(\eta,\eta)=G-2r^2, \qquad
	\ri(\eta,e_1)=0, \qquad
	\ri(\eta,e_2)=-(x'\,r_x+y'\,r_y).
	\end{equation}
	Also, the geodesic torsion of the horizontal lift $\beta(s)$ of  $\alpha(s)$ is
	\begin{equation}\label{torsion}
	\tau_g(s)=-\langle\ln_{e_1} e_2,\eta\rangle=-r(\alpha(s)),
	\end{equation}
	and, since $\ln_{e_2}e_2=0$, the mean curvature function of $S$ is given by
	\begin{equation}\label{H}
          H=\langle\ln_{e_1}e_1, \eta\rangle=\kappa_ g.
	\end{equation}
	Substituting \eqref{ricci-2} and \eqref{torsion} in \eqref{sys-Ou}, we obtain
	\begin{equation}\label{last}
	\left\{\begin{aligned}
	&\kappa_g''-\kappa_g^3+(G-4r^2)\,\kappa_g=0\\
	&\kappa_g\,\kappa_g'=0\\
	&r\,\kappa_g'+(x'\,r_x+y'\,r_y)\,\kappa_g=0\\
	\end{aligned}
	\right.
	\end{equation}
	Therefore, from the second equation in \eqref{last}, since $S$ is proper, we deduce that $k_g=c\neq 0$, $c\in\r$.  Moreover the first equation gives $c^2=G-4r^2$. Thus, the Hopf tube is proper biharmonic if and only if
	\begin{equation}
	\left\{\begin{aligned}
	&H^2=G-4r^2,\\
	&(x'\,r_x+y'\,r_y)=0.\\
	\end{aligned}
	\right.
	\end{equation}
	The result follows observing that $(x'\,r_x+y'\,r_y)=0$ is equivalent to $r\equiv \text{constant}$ along the curve $\alpha$ and, so, along the surface.
\end{proof}

\begin{example}
Take $\Omega = (\varepsilon_1,\varepsilon_2)\times (0,2\pi)$. Consider a constant $r$ and any smooth function $f:(\varepsilon_1,\varepsilon_2)\rightarrow \mathbb{R}$ such that there exists $t_0\in (\varepsilon_1,\varepsilon_2)$ with
$f \left[f''(t_0)+4 r^2 f(t_0)\right]+\left(f'(t_0)\right)^2=0$.  Define on $\Omega$ the metric $ds^2=dt^2+f^2(t)d\vartheta^2$ and the curve $\gamma (s):=(t_0, {s}/{f(t_0)}+\mu),\, \mu\in \mathbb{R}$. Then, $\gamma$ is a curve in $M=(\Omega,ds^2)$ with constant geodesic curvature $\kappa_g=f'(t_0)/f(t_0)$ and the Gaussian curvature of $M$ is constant along $\gamma$ given by $G=-{f''(t_0)}/{f(t_0)}$.  Using Theorem~\ref{teo-hopf-tube} we conclude that the Hopf cylinder shaped on $\gamma$ is a proper biharmonic surface in $\mathbb{E}({G}, {r})$.
\end{example}

\end{document}